\newcolumntype{C}{>{\centering\arraybackslash}X}
\newcolumntype{D}{>{\centering\arraybackslash}X}
\newtheorem{theorem}{Theorem}
\newtheorem{lemma}[theorem]{Lemma}
\newtheorem{corollary}[theorem]{Corollary}
\newtheorem*{claim*}{Claim}
\theoremstyle{remark}
\newcommand{\bw}{\ensuremath{\!\!\between\!\!}}
\newcommand{\N}{\ensuremath{\mathbb{N}}}
\newcommand{\R}{\ensuremath{\mathbb{R}}}
\newcommand{\cX}{\ensuremath{\mathcal{X}}}
\newcommand{\cY}{\ensuremath{\mathcal{Y}}}
\newcommand{\PP}{\ensuremath{\mathcal{P}}}
\newcommand{\QQ}{\ensuremath{\mathcal{Q}}}
\newcommand{\CC}{\ensuremath{\mathcal{C}}}
\begin{document}

\title{Poset Ramsey number $R(P,Q_n)$. I.\linebreak Complete multipartite posets}

\author{Christian Winter\thanks{Karlsruhe Institute of Technology, Karlsruhe, Germany, E-mail:\ \textit{christian.winter@kit.edu}. Research was partially supported by DFG grant FKZ AX 93/2-1.}}

\maketitle

\begin{abstract}
A poset $(P',\le_{P'})$ contains a copy of some other poset $(P,\le_P)$ if there is an injection $f\colon P'\to P$ where for every $X,Y\in P$, $X\le_P Y$ if and only if $f(X)\le_{P'} f(Y)$.
For any posets $P$ and $Q$, the poset Ramsey number $R(P,Q)$ is the smallest integer $N$ such that any blue/red coloring of a Boolean lattice of dimension $N$
contains either a copy of $P$ with all elements blue or a copy of $Q$ with all elements red.
We denote by $K_{t_1,\dots,t_\ell}$ a complete $\ell$-partite poset, i.e.\ a poset consisting of $\ell$ pairwise disjoint sets $A^i$ of size $t_i$, $1\le i\le \ell$, 
such that for any $i,j\in\{1,\dots,\ell\}$ and any two $X\in A^{i}$ and $Y\in A^{j}$, $X<Y$ if and only if $i<j$.
In this paper we show that $R(K_{t_1,\dots,t_\ell},Q_n)\le n+\frac{(2+o_n(1))\ell n}{\log n}$. 
\end{abstract}

\section{Introduction}

Ramsey theory is a field of combinatorics that asks whether in any coloring of the elements in a discrete host structure we find a particular monochromatic substructure.
This question offers a lot of variations depending on the chosen sub- and host structure.
While originating from a result of Ramsey \cite{R} on uniform hypergraphs from 1930, the most well-known setting considers monochromatic subgraphs in edge-colorings of complete graphs.
In contrast, this paper considers a Ramsey-type problem using partially ordered sets, or \textit{posets} for short, as the host structure.
A \textit{poset} is a set $P$ which is equipped with a relation $\le_P$ on the elements of $P$ that is transitive, reflexive, and antisymmetric. 
Whenever it is clear from the context we refer to such a poset $(P,\le_P)$ just as $P$.
Given a non-empty set $\cX$, the poset consisting of all subsets of $\cX$ equipped with the inclusion relation $\subseteq$ is the \textit{Boolean lattice} $\QQ(\cX)$
of \textit{dimension} $|\cX|$. We use $Q_n$ to denote a Boolean lattice with an arbitrary $n$-element ground set. 
\\

We say that a poset $P_1$ is an \textit{induced subposet} of another poset $P_2$ if $P_1\subseteq P_2$ and for every two $X,Y\in P_1$, 
$$X \leq_{P_1}  Y\text{ if and only if }X \leq_{P_2} Y.$$
A \textit{copy} of $P_1$ in $P_2$ is an induced subposet $P'$ of $P_2$ which is isomorphic to $P_1$.\\
Here we consider color assignments of the elements of a poset $P$ using the colors \textit{blue} and \textit{red}, 
i.e.\ mappings $c\colon P \rightarrow \{\text{blue}, \text{red}\}$, which we refer to as a \textit{blue/red coloring} of $P$.
A poset is colored \textit{monochromatically} if all its elements have the same color. 
If a poset is colored monochromatically in blue [red], we say that it is a \textit{blue} [\textit{red}] \textit{poset}.
The elements of a poset $P$ are usually referred to as \textit{vertices}.
\\

Axenovich and Walzer \cite{AW} were the first to consider the following Ramsey variant on posets.
For posets $P$ and $Q$, the \textit{poset Ramsey number} of $P$ versus $Q$ is given by
\begin{multline*}
R(P,Q)=\min\{N\in\N \colon \text{ every blue/red coloring of $Q_N$ contains either}\\ 
\text{a blue copy of $P$ or a red copy of $Q$}\}.
\end{multline*}

As a central focus of research in this area, bounds on the poset Ramsey number $R(Q_n,Q_n)$ were considered and gradually improved 
with the best currently known bounds being\linebreak $2n+1 \leq R(Q_n, Q_n) \leq n^2 -n+2$, 
see listed chronologically Walzer \cite{W}, Axenovich and Walzer \cite{AW}, Cox and Stolee \cite{CS}, Lu and Thompson \cite{LT}, Bohman and Peng \cite{BP}. 
The related off-diagonal setting $R(Q_m,Q_n)$, $m<n$, also received considerable attention over the last years.
When both $m$ and $n$ are large, the best known upper bound is due to Lu and Thompson \cite{LT}, yielding together with a trivial lower bound that
 $m+n\le R(Q_m,Q_n)\le \big(m-2+o(1)\big)n+m$.
When $m$ is fixed and $n$ is large, an exact result is only known in the trivial case $m=1$ where $R(Q_1, Q_n)=n+1$.
For $m=2$, after earlier estimates by Axenovich and Walzer \cite{AW} as well as Lu and Thompson \cite{LT},
 the best known upper bound is due to Gr\'osz, Methuku, and Tompkins \cite{GMT},
which is complemented by a lower bound shown recently by Axenovich and the present author \cite{QnV}:
$$n \left(1 + \frac{1}{15 \log n}\right) \le R(Q_2,Q_n) \le n\left(1 + \frac{2+o(1)}{\log n}\right).$$

%
%
%

In this paper we generalize the upper bound of Grósz, Methuku and Tompkins \cite{GMT} on $R(Q_2,Q_n)$ to a broader class of posets, namely 
we discuss the poset Ramsey number of a \textit{complete multipartite poset} versus the Boolean lattice $Q_n$.
A \textit{complete $\ell$-partite poset} $K_{t_1,\dots,t_\ell}$ is a poset on $\sum_{i=1}^\ell t_i$ vertices obtained as follows. 
Consider $\ell$ pairwise disjoint \textit{layers} $A^1,\dots,A^\ell$ of vertices, where layer $A^i$ consists of $t_i$ distinct vertices.
Now for any two indexes $i,j\in\{1,\dots,\ell\}$ and any vertices $X\in A^i$, $Y\in A^j$, let $X<Y$ if and only if $i<j$.
Such a poset can be seen as a complete blow-up of a chain.
Note that $Q_2=K_{1,2,1}$.

\begin{figure}[H]
\centering
\includegraphics[scale=0.6]{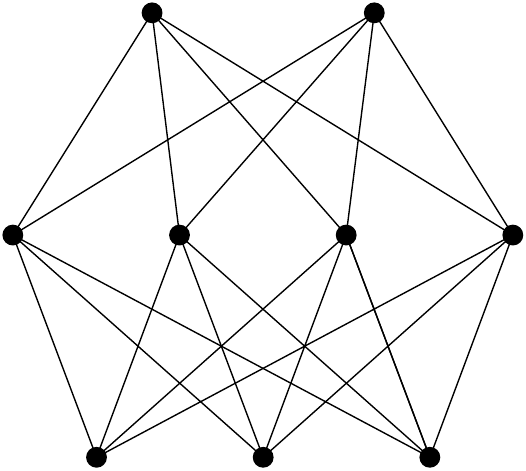}
\caption{Hasse diagram of the complete $3$-partite poset $K_{3,4,2}$}
\end{figure}

\begin{theorem}\label{thm_multipartite}
For $n\in\N$, let $\ell\in\N$ be an integer such that $\ell=o(\log n)$ and
 for $i\in\{1,\dots,\ell\}$, let $t_i\in\N$ be integers with $\sup_i t_i =n^{o(1)}$. Then
$$R(K_{t_1,\dots,t_\ell},Q_n)\le n\left(1+\frac{2+o(1)}{\log n}\right)^\ell\le n+\frac{\big(2+o(1)\big)\ell n}{\log n}.$$
\end{theorem}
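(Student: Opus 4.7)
The plan is to proceed by induction on $\ell$. The base case $\ell=1$ asserts $R(K_{t_1},Q_n)\le n\bigl(1+\tfrac{2+o(1)}{\log n}\bigr)$ for an antichain of size $t_1=n^{o(1)}$; I would obtain this by adapting the Grósz--Methuku--Tompkins argument for $R(Q_2,Q_n)$. Given a blue/red coloring of $Q_N$ with $N=n+k$ and $k=\tfrac{(2+o(1))n}{\log n}$, if no blue antichain of size $t_1$ exists, then the blue elements have width less than $t_1$ and so (by Dilworth's theorem) are covered by fewer than $t_1$ chains, each of length at most $N+1$; one then locates a red induced $Q_n$ in a sub-interval of $Q_N$ disjoint from these chains, the size of the gap $k$ providing the room needed for this embedding.

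For the inductive step from $\ell-1$ to $\ell$, given a blue/red coloring of $Q_N$ with $N=N_\ell=N_{\ell-1}\bigl(1+\tfrac{2+o(1)}{\log n}\bigr)$, I would partition the ground set $[N]=B\sqcup T$ with $|B|=N_{\ell-1}$ and $|T|=k:=N-N_{\ell-1}$. For every $T'\subseteq T$, the slice $S_{T'}:=\{X\cup T':X\subseteq B\}$ is an induced copy of $Q_{N_{\ell-1}}$, and by the inductive hypothesis each $S_{T'}$ contains either a red copy of $Q_n$ (and we are done) or a blue copy of $K_{t_1,\dots,t_{\ell-1}}$. Assuming the latter holds in every slice, fix one such copy $\KK\subseteq S_\emptyset=\QQ(B)$ with top layer $A^{\ell-1}$ and $U:=\bigcup A^{\ell-1}\subseteq B$. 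Extending $\KK$ to a blue $K_{t_1,\dots,t_\ell}$ requires finding $t_\ell$ pairwise incomparable blue elements in the upper interval $[U,[N]]\cong Q_{N-|U|}$; applying the base case to this interval yields either the desired top layer $A^\ell$ (done) or a red $Q_n$ (done).

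The main technical obstacle is that $|U|$ could a priori be as large as $|B|=N_{\ell-1}$, leaving the interval $[U,[N]]$ of dimension only $k$, far below the $n+k$ needed to invoke the base case. I would resolve this by exploiting the wealth of blue $K_{t_1,\dots,t_{\ell-1}}$ copies guaranteed by the $2^{|T|}$ slices $S_{T'}$: an averaging or pigeonhole argument across these slices should either yield a single $\KK$ whose top-layer union $U$ is controllably small, or allow us to assemble $A^\ell$ from blue elements drawn from multiple slices $S_{T'}$ whose $T$-parts themselves form an antichain in $\QQ(T)$, so that the top layer is built in the $T$-direction while $\KK$ sits in $\QQ(B)$. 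The bookkeeping in this combining step---tracking how many dimensions are consumed at each layer while staying within the budget $N_\ell-N_{\ell-1}\approx \tfrac{2n}{\log n}$---is where I expect the main effort of the proof to lie.
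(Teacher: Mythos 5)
Your approach differs substantially from the paper's and, as written, has a genuine gap in the inductive step. The paper does not attempt any geometric partition of the ground set into $B\sqcup T$. Instead it proves a \emph{gluing lemma}: if $P_1$ has a unique maximal vertex and $P_2$ a unique minimal vertex, and $P_1\bw P_2$ denotes the poset obtained by identifying the two, then $R(P_1\bw P_2,Q_n)\le R(P_1,Q_{R(P_2,Q_n)})$. The proof is by an auxiliary green/yellow recoloring: blue vertices that cannot be the maximum of a blue $P_1$ are colored green, everything else yellow; the absence of a green $P_1$ forces a yellow $Q_{R(P_2,Q_n)}$, and inside that sublattice every blue vertex must be ``$P_2$-clear,'' so no blue $P_2$ exists, hence a red $Q_n$. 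The theorem then follows by embedding $K_{t_1,\dots,t_\ell}$ into $K_{1,t,1,t,\dots,1,t,1}$ (with $t=\sup_i t_i$) and gluing copies of the spindle $K_{1,t,1}$ at their unique top/bottom vertices, with the base bound on $R(K_{1,t,1},Q_n)$ coming from a separate spindle theorem proved via a chain lemma and a double pigeonhole over linear orderings of $\cY$.

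The obstacle you yourself identify --- that the top-layer union $U=\bigcup A^{\ell-1}$ may have size close to $N_{\ell-1}$, leaving the interval $[U,[N]]$ far too small --- is exactly the crux, and your proposed resolution (``an averaging or pigeonhole argument across the $2^{|T|}$ slices'') is not an argument. Nothing forces the blue copies of $K_{t_1,\dots,t_{\ell-1}}$ appearing in different slices $S_{T'}$ to have related, let alone identical, $B$-parts, so one cannot assemble a top layer ``in the $T$-direction'' by combining them; and $2^{|T|}=2^k$ is tiny compared to the $2^{N_{\ell-1}}$ possibilities for $U$, so pigeonhole on $U$ gives nothing. The paper's gluing lemma sidesteps the positional problem entirely by finding an abstract copy of $Q_{R(P_2,Q_n)}$ whose blue vertices are all already known to sit atop a blue $P_1$; this is the ingredient your proposal is missing. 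A secondary remark: your base case $\ell=1$ is just an antichain, for which $R(K_{t},Q_n)=n+O(1)$ follows from Sperner's theorem; adapting the Gr\'osz--Methuku--Tompkins machinery there is both overkill and not quite the right tool, since that argument targets $Q_2=K_{1,2,1}$ rather than an antichain. The genuinely nontrivial base object in the paper is the three-layer spindle $K_{1,t,1}$, which is also what makes the gluing lemma applicable (unique top and bottom).
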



Here and throughout this paper, the $O$-notation is used exclusively depending on $n$, i.e.\ $f(n)=o(g(n))$ if and only if $\frac{f(n)}{g(n)}\to 0$ for $n\to\infty$.
For parameters as above, this theorem implies that $R(K_{t_1,\dots,t_\ell},Q_n)=n+o(n)$.
Under the precondition that $\ell$ is fixed, we even obtain a bound that is asymptotically tight in the first and second summand:
We say that a complete $\ell$-partite poset $K=K_{t_1,\dots,t_\ell}$ is \textit{non-trivial}, if it is neither a chain nor an antichain, i.e.\ if $\ell\ge 2$ and $t_i\ge 2$ for some $i\in\{1,\dots,\ell\}$.
Observe that such a non-trivial $K$ contains either a copy of $K_{1,2}$ or $K_{2,1}$, so Theorem 2 of \cite{QnV} yields $R(K,Q_n)\ge n + \frac{n}{15 \log n}$.
Thus for non-trivial~$K$, $R(K,Q_n)=n+\Theta\left(\frac{n}{\log n}\right)$.
For trivial $K$, it is known that $R(K,Q_n)=n+\Theta(1)$. In detail, if $K$ is a chain on $\ell$ vertices, then $R(K,Q_n)= n+\ell-1$, 
where the upper bound is a consequence of Lemma \ref{chain_lem} stated later on and the lower bound is easy to see using a layered coloring of the host lattice.
If $K$ is an antichain on $t$ vertices, then a trivial lower bound, Lemma 3 in Axenovich and Walzer's \cite{AW}, and Sperner's Theorem imply $n\le R(K,Q_n)\le n+\alpha(t)$
where $\alpha(t)$ is the smallest integer such that $\binom{\alpha(t)}{\lfloor\alpha(t)/2\rfloor}\ge t$. 
\\

We shall first consider a special complete multipartite poset that we call a \textit{spindle}.
Given $r\ge 0$, $s\ge1$ and $t\ge 0$, an \textit{$(r,s,t)$-spindle} $S_{r,s,t}$ is defined as the complete multipartite poset $K_{t'_1,\dots,t'_{r+1+t}}$ where
 $t'_1,\dots,t'_r=1$ and $t'_{r+1}=s$ and $t'_{r+2},\dots,t'_{r+1+t}=1$.
 In other words this poset on $r+s+t$ vertices is constructed using an antichain $A$ of size $s$ and two chains $C_r,C_t$ on $r$ and $t$ vertices, respectively, 
 combined such that every vertex of $A$ is larger than every vertex from $C_r$ but smaller than every vertex from $C_t$.
 
\begin{figure}[H]
\centering
\includegraphics[scale=0.6]{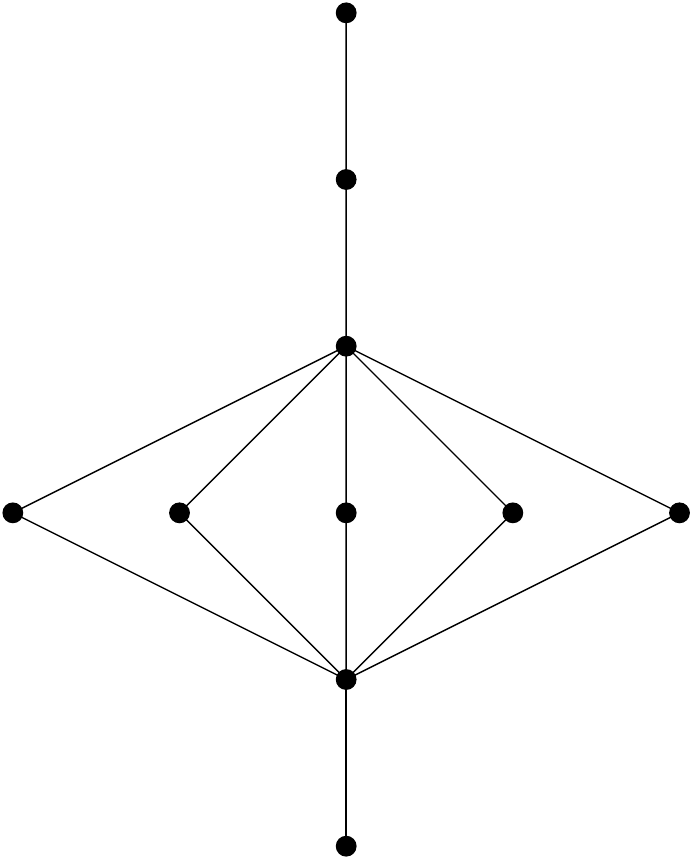}
\caption{Hasse diagram of the spindle $S_{2,5,3}$}
\end{figure}

 \begin{theorem}\label{thm_spindle}
Let $r,s,t$ be non-negative integers with $r+t=o(\sqrt{\log n})$ and $1\le s=n^{o(1)}$ for $n\in\N$. Then 
$$R(S_{r,s,t},Q_n)\le n+\frac{\big(1+o(1)\big)(r+t)n}{\log n}.$$
\end{theorem}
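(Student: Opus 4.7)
The proof proceeds by induction on $r+t$, peeling off one chain element from the top of the spindle at each step via a Gr\'osz--Methuku--Tompkins-style (GMT) embedding argument. The base case $r=t=0$ is an $s$-element antichain, for which the antichain bound from the introduction gives $R(S_{0,s,0},Q_n)\le n+\alpha(s)=n+O(\log s)=n+o(\log n)$ since $s=n^{o(1)}$, which is within the claimed bound once $r+t\ge 1$ and matches it trivially for $r=t=0$.

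For the inductive step, by order-reversing both the host lattice and the spindle we may assume $t\ge 1$ and aim to prove the single-step estimate $R(S_{r,s,t},Q_n)\le R(S_{r,s,t-1},Q_n)+m$ with $m=\lceil(1+o(1))n/\log n\rceil$; iterating this at most $r+t$ times and using $r+t=o(\sqrt{\log n})$ to absorb the accumulated lower-order terms yields the theorem. Given a blue/red coloring of $Q_N$ on ground set $[N]=U\sqcup V$ with $|V|=m$ and $|U|=R(S_{r,s,t-1},Q_n)$, and no red $Q_n$, I would locate a blue $S_{r,s,t}$ as follows. For each $X\subseteq U$, consider the fiber $\mathcal{C}_X=\{X\cup B:B\subseteq V\}\cong Q_m$ sitting above $X$, and call $X$ \emph{extendable} if $X$ itself is blue and some $X\cup B'$ with $\emptyset\ne B'\subseteq V$ is also blue. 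The aim is to locate a blue copy of $S_{r,s,t-1}$ inside $Q_U$ whose top element is extendable; once found, this top element $T$ combined with the blue chain step $T\cup B'$ yields a blue $S_{r,s,t}$.

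The main obstacle is the harvesting step: showing that if no blue $S_{r,s,t-1}$ in $Q_U$ has an extendable top, then the red structure accumulated across the fibers $\mathcal{C}_X$ forces a red $Q_n$. This requires a careful counting or probabilistic argument over subsets $B\subseteq V$ to aggregate red elements from many non-extendable fibers into a single $n$-dimensional Boolean sub-lattice---essentially the GMT calculation for $Q_2$ adapted to the spindle setting. The calibration $m=(1+o(1))n/\log n$ is precisely what makes this aggregation tight, and the leading constant $1$ (rather than the $2$ appearing in the multipartite theorem) should reflect that the spindle induction adds only a single chain element per step rather than an entire layer of the poset.
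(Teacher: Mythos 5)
Your approach is genuinely different from the paper's, and it has a gap at exactly the point you flag as "the main obstacle."

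The paper does not induct on $r+t$ at all. It fixes $k \approx \frac{(r+t+o(1))n}{\log n}$ extra coordinates $\cY$ alongside $\cX$ with $|\cX|=n$, applies Lemma~\ref{chain_lem} to each of the $k!$ linear orderings of $\cY$ to obtain $k!$ blue chains of length $k+1$, and then runs a double pigeonhole: the numeric claim $k! > 2^{(r+t)(n+k)}(s-1)^{k+1}$ first yields $(s-1)^{k+1}+1$ of these chains that agree on their $r$ smallest and $t$ largest vertices, and then either the union of these chains contains an $s$-antichain (which, together with the $r+t$ shared anchor vertices, is the desired blue $S_{r,s,t}$), or Dilworth's theorem covers the union by $s-1$ chains and a second pigeonhole forces two of the original orderings of $\cY$ to coincide, a contradiction. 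This is a single global argument in which the factor $(r+t)$ appears directly in the exponent budget, not through $r+t$ separate peeling steps.

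Your proposal, by contrast, attempts an induction peeling one chain vertex per step, and you explicitly defer the crux: "showing that if no blue $S_{r,s,t-1}$ in $Q_U$ has an extendable top, then the red structure accumulated across the fibers forces a red $Q_n$... requires a careful counting or probabilistic argument." That deferral is the missing proof. Nothing in the proposal actually establishes the single-step inequality $R(S_{r,s,t},Q_n)\le R(S_{r,s,t-1},Q_n)+\lceil(1+o(1))n/\log n\rceil$; the fact that $|U|=R(S_{r,s,t-1},Q_n)$ only guarantees a blue $S_{r,s,t-1}$ or red $Q_n$ inside $Q_U$, with no control over whether the blue copy's top is extendable, and converting "every blue $S_{r,s,t-1}$ in $Q_U$ has a non-extendable top" into a red $Q_n$ is precisely the hard part. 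Moreover, the phrase "extendable top" is not even well-defined at the boundary of the induction: when $t-1=0$ and $s\ge 2$, the poset $S_{r,s,0}$ has $s$ maximal elements rather than one, so "the top element $T$" does not exist and you would need a vertex $T\cup B'$ simultaneously above all $s$ of them, which is a stronger and differently-shaped requirement than your fiber setup produces. Finally, note that the known GMT-type argument for $Q_2=S_{1,2,1}$ is not structured as two successive $+\frac{n}{\log n}$ steps; it is a single global linear-orderings-of-$\cY$ argument whose budget is $2n$ in the exponent, so "the GMT calculation adapted" cannot be cited as evidence that a one-vertex peel costs only $(1+o(1))n/\log n$ without actually carrying out the adapted computation.

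In short: the base case and the arithmetic of the iteration are plausible, but the inductive step is asserted rather than proven, and its formulation breaks at $t-1=0$. The paper avoids all of this by running the pigeonhole once, globally, over all $r+t$ anchor positions at once.
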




The spindle $S_{1,s,1}$ is known in the literature as an \textit{$s$-diamond} $D_s$, while the poset $S_{1,s,0}$ is usually referred to as an \textit{$s$-fork} $V_s$.

\begin{corollary}\label{cor_fork}\label{cor_diamond}
Let $s\in\N$ with $s=n^{o(1)}$ for $n\in\N$. Then
$$R(D_s,Q_n)\le n+\frac{\big(2+o(1)\big)n}{\log n}\qquad\text{ and }\qquad R(V_s,Q_n)\le n+\frac{\big(1+o(1)\big)n}{\log n}.$$
\end{corollary}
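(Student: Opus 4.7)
My plan is to deduce both bounds directly from Theorem \ref{thm_spindle}, since the $s$-diamond and $s$-fork are explicitly singled out in the paragraph above as specific instances of spindles. No new combinatorial arguments should be required beyond unpacking definitions and verifying hypotheses.

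First, I would confirm the identifications $D_s = S_{1,s,1}$ and $V_s = S_{1,s,0}$, which follow immediately from the spindle definition: $D_s$ is the complete multipartite poset $K_{1,s,1}$ (one vertex below an antichain of size $s$ which is below one further vertex), and $V_s$ is $K_{1,s}$ (one vertex below an antichain of size $s$, with nothing above). Next, I would verify the parameter preconditions of Theorem \ref{thm_spindle} in the two cases. For $D_s$, we have $r+t = 1+1 = 2$, and for $V_s$, we have $r+t = 1+0 = 1$. In either case $r+t$ is a fixed constant, hence trivially $o(\sqrt{\log n})$ as $n \to \infty$, and the assumption $s = n^{o(1)}$ is inherited directly from the hypothesis of the corollary.

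Finally, I would substitute $r+t = 2$ and $r+t = 1$, respectively, into the upper bound $n + \frac{(1+o(1))(r+t)n}{\log n}$ supplied by Theorem \ref{thm_spindle}. This yields
$$R(D_s, Q_n) = R(S_{1,s,1}, Q_n) \le n + \frac{(2+o(1))n}{\log n}$$
and
$$R(V_s, Q_n) = R(S_{1,s,0}, Q_n) \le n + \frac{(1+o(1))n}{\log n},$$
which are exactly the two stated inequalities. Since the corollary is a pure specialization of Theorem \ref{thm_spindle}, there is no genuine obstacle in the deduction itself; the entire combinatorial content has already been carried out in proving that theorem.
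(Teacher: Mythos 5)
Your deduction is correct and is exactly the paper's (implicit) argument: the corollary is stated as an immediate specialization of Theorem \ref{thm_spindle} via $D_s=S_{1,s,1}$ ($r+t=2$) and $V_s=S_{1,s,0}$ ($r+t=1$), with the hypotheses $r+t=o(\sqrt{\log n})$ and $s=n^{o(1)}$ trivially satisfied. Nothing further is needed.
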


%
%

For a positive integer $n\in\N$, we use $[n]$ to denote the set $\{1,\dots,n\}$, additionally let $[0]=\varnothing$.
Here `$\log$' always refers to the logarithm with base $2$. 
We omit floors and ceilings where appropriate.

The structure of the paper is as follows. First, in Section \ref{sec_prelim} we introduce some notation and two preliminary lemmas. 
In Section \ref{sec_multipartite} we show the bound for spindles and afterwards the generalization for general complete multipartite posets.


\section{Preliminaries}\label{sec_prelim}


\subsection{Red $Q_n$ versus blue chain}\label{sec_chain}

Let $\cX$ and $\cY$ be disjoint sets. 
Then the vertices of the Boolean lattice $\QQ(\cX\cup\cY)$, i.e.\ the subsets of $\cX\cup\cY$, can be partitioned with respect to $\cX$ and $\cY$ in the following manner.
Every $Z\subseteq\cX\cup\cY$ has an $\cX$\textit{-part} $X_Z=Z\cap \cX$ and a $\cY$\textit{-part} $Y_Z=Z\cap\cY$.
In this setting, we refer to $Z$ alternatively as the pair $(X_Z,Y_Z)$. 
Conversely, for all $X\subseteq\cX$, $Y\subseteq\cY$, the pair $(X,Y)$ corresponds uniquely to the vertex $X\cup Y\in \QQ(\cX\cup\cY)$.
One can think of such pairs as elements of the Cartesian product $2^\cX \times 2^\cY$ which has a canonical bijection to $2^{\cX\cup\cY}=\QQ(\cX\cup\cY)$.
Observe that for $X_i\subseteq\cX, Y_i\subseteq\cY$, $i\in[2]$, we have $(X_1,Y_1)\subseteq (X_2,Y_2)$ if and only if $X_1\subseteq X_2$ and $Y_1\subseteq Y_2$. 
\\

\noindent We shall need the following lemma.

\begin{lemma}\label{chain_lem}
Let $\cX$, $\cY$ be disjoint sets with $|\cX|=n$ and $|\cY|=k$, for some $n,k\in\N$. Let $\QQ=\QQ(\cX\cup\cY)$ be a blue/red colored Boolean lattice.
Fix some linear ordering  $\pi=(y_1,\dots,y_k)$  of $\cY$ and define $Y(0), \ldots, Y(k)$ by $Y(0)=\varnothing$ and  $Y(i)=\{y_1,\dots,y_i\}$ for $i\in[k]$.
Then there exists at least one of the following in $\QQ$:
\renewcommand{\labelenumi}{(\alph{enumi})}
\begin{enumerate}
\item a red copy of $Q_n$, or
\item a blue chain of length $k+1$ of the form $(X_0,Y(0)),\dots,(X_{k},Y(k))$ where $X_0 \subseteq X_1 \subseteq\dots \subseteq X_k\subseteq \cX$.
\end{enumerate}
\end{lemma}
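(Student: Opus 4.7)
The plan is to prove the contrapositive: assume case (b) fails, and construct an explicit induced red copy of $Q_n$ inside $\QQ$ as a transversal across the levels $Y(0),\dots,Y(k)$, by choosing, for each $X\subseteq\cX$, a level $\lambda(X)\in\{0,\dots,k\}$ so that $(X,Y(\lambda(X)))$ is red and $\lambda$ is monotone under inclusion. Given such a $\lambda$, the map $\phi\colon X\mapsto (X,Y(\lambda(X)))$ is an induced embedding of $Q_n$ onto entirely red vertices: it is order-preserving by monotonicity of $\lambda$, injective since the $\cX$-coordinate recovers $X$, and order-reflecting because $\phi(X_1)\subseteq\phi(X_2)$ already forces $X_1\subseteq X_2$ from the $\cX$-parts alone.

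To define $\lambda$, for each $X\subseteq\cX$ let
\[
a(X) \;=\; \max\bigl\{\,i\in\{0,1,\dots,k\} : \exists\, X_0\subseteq X_1\subseteq\cdots\subseteq X_i\subseteq X \text{ with every }(X_j,Y(j))\text{ blue}\,\bigr\},
\]
with the convention $a(X):=-1$ if no such $i\ge 0$ exists. Then $a$ is monotone in $X$, since any chain witnessing $a(X)\ge i$ also witnesses $a(X')\ge i$ whenever $X\subseteq X'$; moreover the failure of case (b) is equivalent to $a(\cX)\le k-1$, so by monotonicity $a(X)\le k-1$ for all $X$. Setting $\lambda(X):=a(X)+1\in\{0,\dots,k\}$ gives the required monotone level assignment. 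To verify that $(X,Y(\lambda(X)))$ is red, note that if it were blue we could append $X_{\lambda(X)}:=X$ to an optimal chain for $a(X)$ (or, if $a(X)=-1$, take the singleton chain $X_0:=X$) and obtain a longer blue chain inside $X$, contradicting the maximality in the definition of $a(X)$.

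The main conceptual hurdle is resisting a naive bottom-up greedy construction of the chain in (b), which picks a blue $(X_0,\varnothing)$, then a blue extension at level $1$, and so on: such a procedure can get stuck and, on failing at step $j$, only delivers a red Boolean sublattice of dimension $n-|X_j|$, which is too small. The global level function $\lambda$ above sidesteps this by allowing the target red $Q_n$ to span several levels simultaneously, so that every $X\subseteq\cX$—not merely those above some chosen $X_j$—contributes one red vertex to the final copy of $Q_n$.
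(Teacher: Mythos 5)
Your proof is correct. Note that the paper does not actually reproduce a proof of Lemma~\ref{chain_lem}: it only remarks that the lemma was used implicitly by Gr\'osz, Methuku and Tompkins~\cite{GMT} and proved explicitly as Lemma~8 of~\cite{QnV}. Your argument via the monotone level function $a(X)$ (the length of the longest blue prefix chain realizable inside $X$) and the resulting embedding $\phi(X)=(X,Y(a(X)+1))$ is precisely the standard proof of this embedding lemma and matches the argument in the cited reference; the verification steps (monotonicity of $a$, redness of $(X,Y(\lambda(X)))$, and that $\phi$ is injective, order-preserving, and order-reflecting) are all carried out correctly.
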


Note that a version of this lemma was used implicitly in a paper of Gr\'osz, Methuku and Tompkins \cite{GMT}.
It was stated explicitly and reproved by Axenovich and the author, see Lemma 8 in \cite{QnV}.

\subsection{Gluing two posets}
By identifying vertices of two posets, they can be ``glued together'' creating a new poset. 
We will later construct complete multipartite posets by gluing spindles on top of each other using the following definition. 
Given a poset $P_1$ with a unique maximal vertex $Z_1$ and a poset $P_2$ disjoint from $P_1$ with a unique minimal vertex $Z_2$, 
let $P_1\bw P_2$ be the poset obtained by identifying $Z_1$ and $Z_2$.
Formally speaking, $P_1\bw P_2$ is the poset $(P_1\setminus\{Z_1\}) \cup (P_2\setminus \{Z_2\}) \cup \{Z\}$ for a $Z\notin P_1\cup P_2$
where for any two $X,Y\in P_1\bw P_2$, $X<_{P_1\!\between\! P_2}Y$ if and only if one of the following five cases hold:
$X,Y\in P_1 $ and $X<_{P_1}Y$; $X,Y\in P_2$ and $X<_{P_2} Y$; $X\in P_1$ and $Y \in P_2$; $X\in P_1$ and $Y=Z$; or $X=Z$ and $Y\in P_2$.

\begin{figure}[H]
\centering
\includegraphics[scale=0.55]{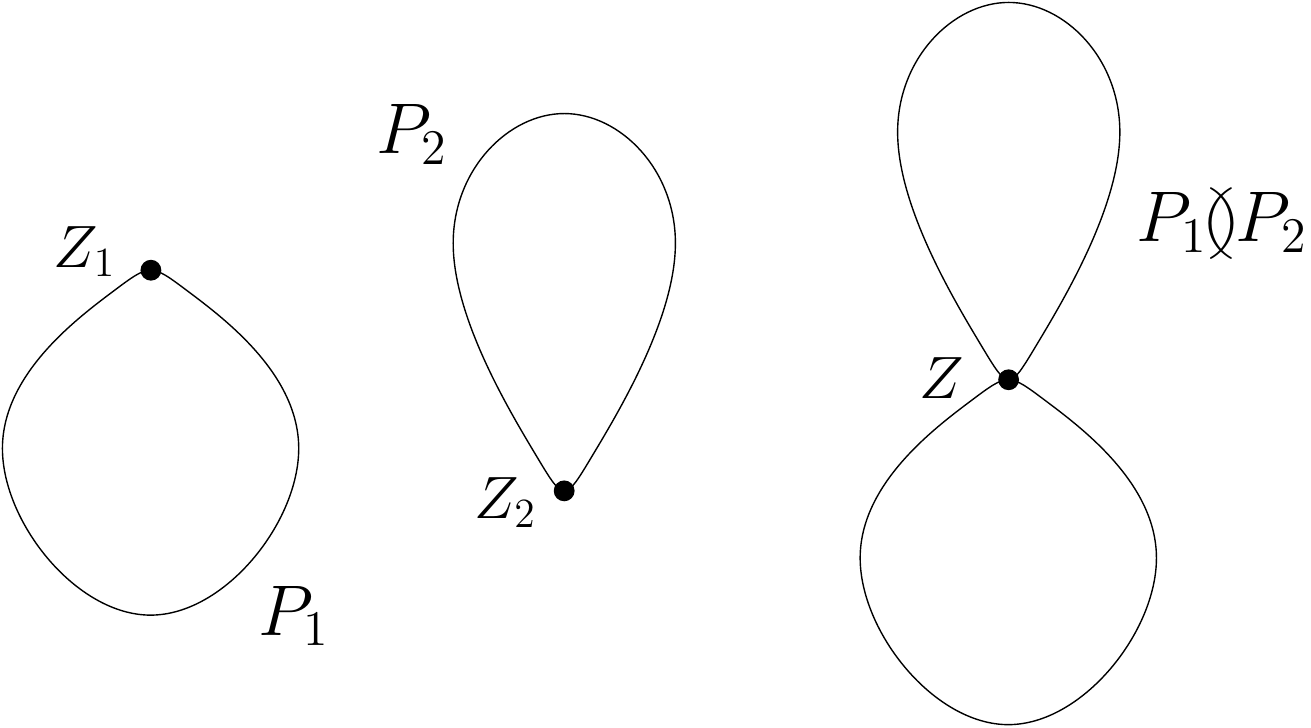}
\caption{Creating $P_1\bw P_2$ from $P_1$ and $P_2$}
\end{figure}

\begin{lemma}\label{lem_gluing}
Let $P_1$ be a poset with a unique maximal vertex and let $P_2$ be a poset with a unique minimal vertex. 
Then $R(P_1\bw P_2,Q_n)\le R(P_1,Q_{R(P_2,Q_n)})$.
\end{lemma}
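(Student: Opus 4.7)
Set $M=R(P_2,Q_n)$ and $N=R(P_1,Q_M)$. Given any blue/red coloring $c$ of a Boolean lattice $\QQ=\QQ_N$, the goal is to exhibit either a blue copy of $P_1\bw P_2$ or a red copy of $Q_n$. The key idea is to introduce an auxiliary blue/red coloring $c'$ of $\QQ$ which records, at each vertex, whether that vertex could serve as the ``gluing point'' of a blue $P_1\bw P_2$. Concretely, declare $c'(Z)=\text{blue}$ if $\QQ$ contains a $c$-blue copy of $P_2$ whose unique minimum vertex is $Z$ (equivalently, a blue $P_2$ copy using only $Z$ and vertices in the strict upset of $Z$ inside $\QQ$); otherwise declare $c'(Z)=\text{red}$. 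Since $N=R(P_1,Q_M)$, the coloring $c'$ of $\QQ$ contains a $c'$-blue copy of $P_1$ or a $c'$-red copy of $Q_M$.

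Suppose first that we find a $c'$-blue copy of $P_1$ in $\QQ$, and let $Z$ be the vertex representing its unique maximum. By definition of $c'$, there is a $c$-blue copy of $P_2$ whose minimum is $Z$ and whose remaining vertices lie strictly above $Z$ in $\QQ$. On the other hand, every vertex of the $P_1$ copy other than $Z$ lies strictly below $Z$ in $\QQ$, hence strictly below every non-minimum vertex of the $P_2$ copy. The union of these two copies consists of vertices that are all $c$-blue, and the comparabilities in $\QQ$ match precisely those of $P_1\bw P_2$, yielding a $c$-blue induced copy of $P_1\bw P_2$.

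Suppose instead that we find a $c'$-red copy of $Q_M$ in $\QQ$. Restrict the original coloring $c$ to this $Q_M$ and apply the Ramsey property of $M=R(P_2,Q_n)$: either we obtain a $c$-red copy of $Q_n$, which completes the proof, or we obtain a $c$-blue copy of $P_2$ inside this $Q_M$. In the latter case, let $Z$ be the minimum of that blue $P_2$ copy; then $Z$ satisfies the defining condition for $c'(Z)=\text{blue}$, contradicting the assumption that every vertex of our $Q_M$ is $c'$-red.

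\textbf{Main obstacle.} The only subtlety is designing $c'$ so that a $c'$-blue copy of $P_1$ automatically lifts to a genuine induced $c$-blue copy of $P_1\bw P_2$, and simultaneously so that a $c'$-red copy of $Q_M$ really does forbid blue $P_2$'s whose minimum sits in it. Both requirements are captured by the local condition ``$c$-blue $P_2$ with minimum at $Z$ using vertices in the upset of $Z$,'' and once $c'$ is defined this way the rest of the argument is essentially bookkeeping.
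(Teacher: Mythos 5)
Your proof is correct and follows the same double-Ramsey skeleton as the paper's: define an auxiliary coloring of $Q_N$, invoke $N=R(P_1,Q_M)$, then invoke $M=R(P_2,Q_n)$ inside the resulting $Q_M$. The only difference is the choice of auxiliary coloring — the paper marks blue vertices that are ``$P_1$-clear'' (cannot serve as the top of a blue $P_1$) and deduces that no copy of $P_1$ in the marked color can exist, whereas you mark vertices at which a blue $P_2$ starts and handle both branches of the Ramsey dichotomy directly, which turns the paper's contradiction-flavored argument (``no blue $P_1\bw P_2$, hence every blue vertex is $P_1$-clear or $P_2$-clear'') into an explicit construction of the glued poset; this is a presentational variant of the same proof.
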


\begin{proof}
Let $N=R(P_1,Q_{R(P_2,Q_n)})$. Consider a blue/red colored Boolean lattice $\QQ$ of dimension $N$ which contains no blue copy of $P_1\bw P_2$. 
We shall prove that there exists a red copy of $Q_n$ in this coloring.
We say that a blue vertex $X$ in $\QQ$ is \textit{$P_1$-clear} if there is no blue copy of $P_1$ in $\QQ$ containing $X$ as its maximal vertex.
Similarly, a blue vertex $X$ is \textit{$P_2$-clear} if there is no blue copy of $P_2$ in $\QQ$ with minimal vertex $X$.
Observe that every blue vertex is $P_1$-clear or $P_2$-clear (or both), since there is no blue copy of $P_1\bw P_2$.
\\

We introduce an auxiliary coloring of $\QQ$ using colors green and yellow. 
Color all blue vertices which are $P_1$-clear in green and all other vertices in yellow. 
Then this coloring does not contain a monochromatic green copy of $P_1$, since otherwise the maximal vertex of such a copy is not $P_1$-clear.
Recall that $N=R(P_1,Q_{R(P_2,Q_n)})$, thus $\QQ$ contains a monochromatic yellow copy of $Q_{R(P_2,Q_n)}$, which we refer to as $\QQ'$.
\\

Consider the original blue/red coloring of $\QQ'$. Every blue vertex of $\QQ'$ is yellow in the auxiliary coloring, i.e.\ not $P_1$-clear. 
Thus every blue vertex of $\QQ'$ is $P_2$-clear. 
This coloring of $\QQ'$ does not contain a blue copy of $P_2$, since otherwise the minimal vertex of such a copy is not $P_2$-clear.
Note that the Boolean lattice $\QQ'$ has dimension $R(P_2,Q_n)$, thus there exists a monochromatic red copy of $Q_n$ in $\QQ'$, hence also in $\QQ$.
\end{proof}

\begin{corollary}\label{cor_gluing}
Let $P_1$ be a poset with a unique maximal vertex and let $P_2$ be a poset with a unique minimal vertex. 
Suppose that there are functions $f_1,f_2\colon \N\to \R$ with $R(P_1,Q_n)\le f_1(n)n$ and $R(P_2,Q_n)\le f_2(n)n$ for any $n\in\N$ and
such that $f_1$ is monotonically non-increasing.
Then for every $n\in\N$,
$$R(P_1\bw P_2,Q_n)\le f_1(n)f_2(n)n.$$
\end{corollary}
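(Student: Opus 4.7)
The plan is to apply Lemma \ref{lem_gluing} directly and then clean up the resulting expression using the monotonicity of $f_1$. Set $m := R(P_2, Q_n)$, so that the hypothesis on $P_2$ yields $m \le f_2(n)\, n$. Lemma \ref{lem_gluing} then gives
$$R(P_1 \bw P_2, Q_n) \;\le\; R(P_1, Q_m) \;\le\; f_1(m)\, m,$$
where the second inequality is the hypothesis on $P_1$ applied to the integer parameter $m$.

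The remaining step is to replace $f_1(m)$ by the (potentially larger) quantity $f_1(n)$, which via monotonicity reduces to verifying $m \ge n$. This is a standard trivial lower bound: in the all-red coloring of $Q_{n-1}$ there is no blue vertex at all, so certainly no blue copy of $P_2$ (which is nonempty since it has a unique minimal vertex), and there is no red $Q_n$ since $Q_{n-1}$ does not even contain $Q_n$ as an induced subposet. Hence $R(P_2, Q_n) \ge n$, and monotonicity of $f_1$ yields $f_1(m) \le f_1(n)$.

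Concatenating the estimates gives $R(P_1 \bw P_2, Q_n) \le f_1(n)\, f_2(n)\, n$, as required. I do not foresee any genuine obstacle: the monotonicity hypothesis on $f_1$ was tailored precisely to make this substitution go through, and the only subtlety is the brief sanity check that $R(P_2, Q_n) \ge n$.
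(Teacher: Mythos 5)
Your proposal is correct and follows essentially the same route as the paper: apply Lemma \ref{lem_gluing}, bound $R(P_2,Q_n)\le f_2(n)n$, and use $R(P_2,Q_n)\ge n$ together with the monotonicity of $f_1$ to replace $f_1$ evaluated at the larger argument by $f_1(n)$. The only (harmless) difference is that you apply the hypothesis on $P_1$ at $m=R(P_2,Q_n)$ itself, while the paper substitutes $n'=f_2(n)n$; both reduce to the same three ingredients.
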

\begin{proof}
For an arbitrary $n\in\N$, let $n'= f_2(n)n$. 
Note that for any poset $P$, $R(P,Q_n)\ge n$, so $n'\ge n$.
Hence $f_1(n')\le f_1(n)$, and Lemma \ref{lem_gluing} provides
$$R(P_1\bw P_2,Q_n)\le R(P_1,Q_{n'})\le f_1(n')n'\le f_1(n)f_2(n)n.$$
\end{proof}

\section{Proofs of Theorem \ref{thm_spindle} and Theorem \ref{thm_multipartite}} \label{sec_multipartite}


\begin{proof}[Proof of Theorem \ref{thm_spindle}]
Let $\epsilon=\frac{\log s}{\log n}$, so $s=n^{\epsilon}$ and $\epsilon=o(1)$. We can suppose that $n$ is large\linebreak and hence $\epsilon<1$.
Then let $c=\frac{r+t+\delta}{1-\epsilon}$ where $\delta=\frac{2(r+1)}{\log n}(\log\log n +r+t)$. Since $r+t=o(\sqrt{\log n})$, $\delta=o(1)$.
Let $k=\frac{cn}{\log n}$. We show for sufficently large $n$ that $R(S_{r,s,t},Q_n)\le n+k$.
If $s=1$, $S_{r,s,t}$ is a chain and $R(S_{r,s,t},Q_n)\le n+r+s\le n+k$ by Lemma \ref{chain_lem}, so suppose $s\ge2$.\\

\noindent \textit{Claim:} For sufficiently large $n$, $k!>2^{(r+t)(n+k)}\cdot (s-1)^{k+1}$.\\
Note that $k!>\left(\frac{k}{e}\right)^k=2^{k(\log k -\log e)}$ and $(s-1)^{k+1}=2^{(k+1)\log (s-1)}$.
Thus we shall prove that $k(\log k -\log e)>(r+t+\log(s-1))k+\log (s-1)+(r+t)n$.
Using that $k=\frac{cn}{\log n}$ and $s-1\le n^{\epsilon}$, we obtain
\begin{align*}
&k\big(\log k-\log (s-1)\big)-k\big(r+t+\log e\big)-\log (s-1)-\big(r+t\big)n\\
&\ge \frac{cn}{\log n} \big(\log c + \log n -\log\log n -\epsilon \log n\big)-\frac{cn }{\log n}\big(r+t+\log e\big)-\epsilon \log n -\big(r+t\big)n\\
&\ge cn\big(1-\epsilon \big)-\big(r+t\big)n-\frac{cn }{\log n}\big(\log\log n +r+t+\log e\big)-\epsilon \log n \\
&> \delta n- \frac{2(r+1)n}{\log n}\big(\log\log n +r+t\big)=0,
\end{align*}
where the last inequality holds for sufficiently large $n$.\qed
\\

Let $\cX$ and $\cY$ be disjoint sets with $|\cX|=n$ and $|\cY|=k$. We consider a blue/red coloring of $\QQ=\QQ(\cX\cup\cY)$ with no red copy of $Q_n$. 
We shall show that there is a monochromatic blue copy of $S_{r,s,t}$ in $\QQ$.
For every linear ordering $\pi=(y_1^\pi,\dots,y_k^\pi)$ of $\cY$, Lemma \ref{chain_lem} provides a blue chain $C^\pi$ 
of the form $Z^\pi_0=(X^\pi_0,\varnothing), Z^\pi_1=(X^\pi_1,\{y_1^{\pi}\}),\dots,Z^\pi_k=(X^\pi_k,\cY)$, where $X^\pi_i\subseteq \cX$.
\\

For every ordering $\pi$ of $\cY$, we consider the $r$ smallest vertices $Z^{\pi}_0,\dots, Z^{\pi}_{r-1}$ 
and the $t$ largest vertices $Z^{\pi}_{k-t+1},\dots, Z^{\pi}_{k}$ of its corresponding chain $C^{\pi}$, so let $I=\{0,\dots,r-1\}\cup\{k-t+1,\dots,k\}$.
Each $Z^{\pi}_{i}$ is a vertex of $\QQ$, so one of the $2^{n+k}$ distinct subsets of $\cX\cup\cY$.
Thus for a fixed $\pi$, there are at most $\left(2^{n+k}\right)^{r+t}$ distinct combinations of the $Z^{\pi}_{i}$, $i\in I$.
Recall that $k!>2^{(r+t)(n+k)}\cdot (s-1)^{k+1}$. By pigeonhole principle, we find a collection $\pi_1,\dots,\pi_m$ of $m= (s-1)^{k+1}+1$ distinct linear orderings of $\cY$
such that for all $j\in[m]$ and $i\in I$, $Z^{\pi_{j}}_{i}=Z_i$ for some $Z_i\subseteq\cX\cup\cY$ independent of $j$. 
In other words, we find many chains with same $r$ smallest vertices $Z_i$, $i\in \{0,\dots,r-1\}$, and same $t$ largest vertices $Z_i$, $i\in \{k-t+1,\dots,k\}$.
Let $\PP$ be the poset induced in $\QQ$ by the chains $C^{\pi_j}$, $j\in[m]$.
\\

If there is an antichain $A$ of size $s$ in $\PP$, then none of the vertices $Z_i$, $i\in I$, is in $A$, because they are contained in every chain $C^{\pi_j}$ and therefore comparable to all other vertices in $\PP$. Now $A$ together with the vertices $Z_i$, $i\in I$, form a copy of $S_{r,s,t}$ in $\PP$. Recall that all vertices in every $C^{\pi_j}$ are blue, i.e.\ $\PP$ is monochromatic blue. 
Thus we obtain a blue copy of $S_{r,s,t}$ in $\QQ$, so we are done.
From now on, suppose that there is no antichain of size $s$ in~$\PP$.
By Dilworth's Theorem we obtain $s-1$ chains $\CC_1,\dots,\CC_{s-1}$ which cover all vertices of $\PP$, i.e.\ all vertices of the $C^{\pi_j}$'s.
Note that the chains $\CC_i$ might consist of significantly more vertices than the $(k+1)$-element chains $C^{\pi_j}$.
\\

Now we consider the restriction to $\cY$ of each vertex in $\PP$, i.e.\ the sets $Z^{\pi}_{i}\cap\cY$, in order to apply the pigeonhole principle once again.
Assume for a contradiction that for some $i\in[s-1]$ there are $Z,Z'\in\CC_i$ with $|Z\cap\cY|=|Z'\cap\cY|$ but $Z\cap\cY\neq Z'\cap\cY$.
This implies that $Z\cap\cY\nsubseteq Z'\cap\cY$ and $Z\cap\cY\nsupseteq Z'\cap\cY$, so $Z$ and $Z'$ are incomparable, a contradiction as they are both contained in the chain $\CC_i$.
Consequently, there is only at most one $\ell$-element set $Y_i^\ell\subseteq \cY$, $\ell\in\{0,\dots,k\}$, for which there exists a $Z\in\CC_i$ with $Z\cap\cY=Y_i^\ell$.
\\

Note that for all $j\in[m]$ and for all $\ell\in\{0,\dots,k\}$, $|Z^{\pi_j}_\ell\cap \cY|=\ell$, i.e.\ $Z^{\pi_j}_\ell\cap \cY=Y_i^\ell$ for some $i\in[s-1]$.
In other words, for fixed $j$, each of the $k+1$ sets $Z^{\pi_j}_\ell\cap \cY$, $\ell\in\{0,\dots,k\}$, is equal to one of at most $s-1$ $Y_i^\ell$'s.
Recall that we have chosen $m=(s-1)^{k+1}+1$ distinct linear orderings $\pi_j$ of $\cY$.
Using pigeonhole principle we find two indexes $j_1,j_2$ such that $Z^{\pi_{j_1}}_\ell\cap \cY=Z^{\pi_{j_2}}_\ell\cap \cY$ for all $\ell\in\{0,\dots,k\}$.
This implies that $y^{\pi_{j_1}}_\ell=y^{\pi_{j_2}}_\ell$, i.e.\ $\pi_{j_1}$ and $\pi_{j_2}$ are equal.
But this is a contradiction to the fact that all orderings $\pi_j$ are distinct.

%
\end{proof}



Now we extend Theorem \ref{thm_spindle} to general complete multipartite posets using Corollary \ref{cor_gluing}.

\begin{proof}[Proof of Theorem \ref{thm_multipartite}]
Let $t=\sup_i t_i$. Then Theorem \ref{thm_spindle} shows the existence of a function $\epsilon(n)=o(1)$ with $R(K_{1,t,1},Q_n)\le n\left(1 + \frac{2+\epsilon(n)}{\log n}\right)$. 
We can suppose that $\epsilon$ is monotonically non-increasing by replacing $\epsilon(n)$ with $\max_{N>n} \{\epsilon(N),0\}$ where necessary.
In order to prove the theorem, we show a stronger statement using the auxiliary $(2\ell+1)$-partite poset $P=K_{1,t,1,t,\dots,1,t,1}$.
Note that $K_{t_1,\dots,t_\ell}$ is an induced subposet of $P$, thus $R(K_{t_1,\dots,t_\ell},Q_n)\le R(P,Q_n)$.
In the following we verify that $$R(P,Q_n)\le n\left(1+\frac{2+\epsilon(n)}{\log n}\right)^\ell.$$

We use induction on $\ell$. If $\ell=1$, then $P=K_{1,t,1}$, so $R(P,Q_n)\le n\left(1 + \frac{2+\epsilon(n)}{\log n}\right)$. 
If $\ell\ge 2$, we ``deconstruct'' the poset into two parts. Consider $P_1=K_{1,t,1}$ and the complete $(2\ell-1)$-partite poset $P_2=K_{1,t,1,t,\dots,1,t,1}$.
Then $P_1$ has a unique maximal vertex and $P_2$ has a unique minimal vertex. Observe that $P_1\bw P_2=P$.
Using the induction hypothesis
$$R(P_1,Q_n)\le n\left(1 + \frac{2+\epsilon(n)}{\log n}\right)\text{ and }R(P_2,Q_n)\le n\left(1+\frac{2+\epsilon(n)}{\log n}\right)^{\ell-1}.$$
Now Corollary \ref{cor_gluing} provides the required bound.
\end{proof}

\section{Conclusive remarks}

In this paper we considered $R(K,Q_n)$ where $K$ is a complete multipartite poset.
Although the presented bounds hold if the parameters of $K$ depend on $n$,
the original motivation for these results concerned the case where $K$ is fixed, i.e.\ independent from $n$:

After $R(Q_2,Q_n)$ was bounded asymptotically sharply by Gr\'osz, Methuku and Tompkins \cite{GMT} and Axenovich and the present author \cite{QnV},
the examination of $R(Q_3,Q_n)$ is an obvious follow-up question.
The best known upper bound is due to Lu and Thompson \cite{LT}, while the best known lower bound can be deduced from a bound on $R(K_{1,2},Q_n)$ in \cite{QnV},
$$n+\tfrac{n}{15\log n}\le R(K_{1,2},Q_n)\le R(Q_3,Q_n)\le \tfrac{37}{16}n+\tfrac{39}{16}.$$
In order to find better bounds and answer the question whether or not $R(Q_3,Q_n)=n+o(n)$, the consideration of $R(P,Q_n)$ for small posets $P$ might prove helpful as building blocks for Boolean lattices. For example, $Q_3$ can be partitioned into a copy of $K_{1,3}$ and a copy of $K_{3,1}$ which interact in a proper way. 
Both of these posets are complete $2$-partite posets with, as shown here, Ramsey numbers bounded by $$R(K_{1,3},Q_n)=R(K_{3,1},Q_n)=n+\Theta\left(\frac{n}{\log n}\right).$$
However, it remains open how to use our estimate to tighten the bounds on $R(Q_3,Q_n)$.
\\


\noindent \textbf{Acknowledgments:}~\quad  The author would like to thank Maria Axenovich for helpful\linebreak discussions and comments on the manuscript. 




\begin{thebibliography}{99}
\bibitem{AW}  M. Axenovich,  and S. Walzer. \textit {Boolean lattices: Ramsey properties and embeddings}. Order \textbf{34}(2), 
287--298 (2017).

\bibitem{QnV}  M. Axenovich,  and C. Winter. \textit{Poset Ramsey numbers: large Boolean lattice versus a fixed poset}. Submitted, preprint available at arXiv:2110.07648v1, 2021.


\bibitem{BP} T. Bohman, and F. Peng. \textit{A Construction for Cube Ramsey}. Preprint, available at arXiv:2102.00317v1, 2021.

%
%
%
%
%
%
\bibitem{CS} C. Cox, and D. Stolee. \textit{Ramsey Numbers for Partially-Ordered Sets}. Order \textbf{35}(3), 557--579, 2018.
%
\bibitem{GMT} D. Gr\'osz, A. Methuku, and C. Tompkins. \textit{Ramsey numbers of Boolean lattices}. Submitted, preprint available at arXiv:2104.02002v1, 2021.
%
%
%
\bibitem{LT} L. Lu, and C. Thompson. \textit{Poset Ramsey Numbers for Boolean Lattices}. Order (2021).
%
%
%
%

\bibitem{R} F. P. Ramsey. \textit{On a Problem of Formal Logic}. Proceedings of the London Mathematical Society \textbf{s2-30}(1), 264--286, 1930. 

%
%
\bibitem{W} S. Walzer. \textit{Ramsey Variant of the 2-Dimension of Posets}. In: Master Thesis, Karlsruhe Institute of Technology (2015).
\end{thebibliography}
\end{document}